\documentclass[11pt,reqno,a4paper]{amsart}

\usepackage{color}
\usepackage{amssymb,epsfig,graphics}
\usepackage{dsfont}

\newtheorem{theorem}{Theorem}[section]

\newtheorem{lemma}[theorem]{Lemma}
\newtheorem{sub-lemma}[theorem]{Sub-Lemma}

\def\E{\mathds{E}}
\def\N{\mathds{N}}
\def\Z{\mathds{Z}}

\def\P{\mathds{P}}

\def\R{\mathds{R}}

\def\Z{\mathds{Z}}
\def\D{\mathcal{D}}

\begin{document}

\title[Empirical processes for RWRS]{Empirical processes for recurrent and transient random walks in random scenery}
\author[Guillotin-Plantard]{Nadine Guillotin-Plantard}
\address{Institut Camille Jordan, CNRS UMR 5208, Universit\'e de Lyon, Universit\'e Lyon 1, 43, Boulevard du 11 novembre 1918, 69622 Villeurbanne, France.}
\email{nadine.guillotin@univ-lyon1.fr}

\author[P\`{e}ne]{Fran\c{c}oise P\`{e}ne}
\address{Univ Brest, Universit\'e de Brest, IUF, LMBA, UMR CNRS 6205, 29238 Brest cedex, France}
\email{francoise.pene@univ-brest.fr}

\author[Wendler]{Martin Wendler}
\address{Institut f\"ur mathematische Stochastik, Otto-von-Guericke-Universit\"at, 39106 Magdeburg, Germany}
\email{martin.wendler@ovgu.de}

\keywords{Random walk; Random Scenery; Empirical Process}
\subjclass[2010]{60G50; 60F17; 62G30}

\begin{abstract}
In this paper, we are interested in the asymptotic behaviour of the sequence 
of processes $(W_n(s,t))_{s,t\in[0,1]}$ with
\begin{equation*}
W_n(s,t):=\sum_{k=1}^{\lfloor nt\rfloor}\big(\mathds{1}_{\{\xi_{S_k}\leq s\}}-s\big)
\end{equation*}
where $(\xi_x, x\in\mathbb \Z^d)$ is a sequence of independent random variables uniformly distributed on $[0,1]$ and $(S_n)_{n\in\N}$ is a random walk evolving in $\Z^d$, independent of the $\xi$'s. In \cite{wend16}, the case where $(S_n)_{n\in\N}$ is a recurrent random walk in $\Z$ such that $(n^{-\frac 1\alpha}S_n)_{n\geq 1}$ converges in distribution to a stable distribution of index $\alpha$, with $\alpha\in(1,2]$, has been investigated. Here, we consider the cases where $(S_n)_{n\in\N}$ is either~:\begin{itemize}
\item[(a)] a transient random walk in $\Z^d$,
\item[(b)] a recurrent random walk in $\Z^d$ such that $(n^{-\frac 1d}S_n)_{n\geq 1}$ converges in distribution to a stable distribution of index $d\in\{1,2\}$.
\end{itemize}
\end{abstract}

\date{\today}
\maketitle

\bibliographystyle{plain}

\section{Introduction and Main Results}

The sequential empirical process has been studied under various assumptions, starting with M\"uller \cite{mull70} under independence. In this paper, we will study the asymptotic behaviour of the sequence of processes $(W_n(s,t))_{s,t\in[0,1]}$ with
\begin{equation}\label{defineprocess}
W_n(s,t):=\sum_{k=1}^{\lfloor nt\rfloor}\big(\mathds{1}_{\{\xi_{S_k}\leq s\}}-s\big)
\end{equation}
where $(S_n)_n$ is either~:
\begin{itemize}
\item[(a)] a transient random walk in $\Z^d$,
\item[(b)] a recurrent random walk in $\Z^d$ such that $(n^{-\frac 1d}S_n)_{n\geq 1}$ converges in distribution to a stable distribution of index $d\in\{1,2\}$
\end{itemize}
and $(\xi_x)_{x\in \Z^d}$ is a sequence or random field of independent random variables uniformly distributed on $[0,1]$, independent of $(S_n)_n$.
The process $(\xi_{S_k})_{k\geq 1}$ can be viewed as the increments of a random walk in random scenery (RWRS, in short) 
$(Z_n)_{n\geq 1}$. In other words, 
\begin{equation*}
\forall n\geq 1, \quad Z_n = \sum_{k=1}^n \xi_{S_k}.
\end{equation*}
To simplify we will assume that the random walk is aperiodic in the sense of Spitzer \cite{spi76},
which amounts to requiring that $\varphi(u) = 1$ if and only if $u \in 2 \pi \Z^d$,
where $\varphi$ is the characteristic function of $S_1$.

RWRS was first introduced in dimension one by Kesten and Spitzer \cite{kest79} and Borodin \cite{MR543530,MR535455} in order to construct new self-similar stochastic processes.
For $d=1$, Kesten and Spitzer \cite{kest79} proved that when 
the random walk and the random scenery belong to the domains of attraction of different stable laws
of indices $1<\alpha\leq 2$ and $0<\beta\leq 2$, respectively,
then there exists $\delta>\frac{1}{2}$ such that
$\big(n^{-\delta}Z_{[nt]}\big)_{t\geq 0}$ converges weakly as $n\rightarrow
\infty$ to a continuous $\delta$-self-similar process with stationary increments,
$\delta$ being related to $\alpha$ and $\beta$
 by $\delta=1-\alpha^{-1}+(\alpha\beta)^{-1}$. The limiting process can be seen as a mixture of  $\beta$-stable processes, but it is not a stable process.
When $0<\alpha<1$ and for arbitrary $\beta$, the sequence $\big(n^{-\frac{1}{\beta}}Z_{[nt]}\big)_{t\geq 0}$ converges
weakly, as $n\rightarrow \infty$, to a stable process with index $\beta$ (see \cite{cast13}). Bolthausen \cite{bolt89} (see also \cite{degli11}) gave a method to solve the case $\alpha=1$ and $\beta=2$ and especially, he proved
that when $(S_{n})_{n\in \mathbb{N}}$ is a recurrent $\mathbb{Z}^{2}$-random
walk, the sequence $\big((n\log n)^{-\frac{1}{2}}Z_{[nt]}\big)_{t\geq 0}$ satisfies a functional
central limit theorem. More recently, the case $d=\alpha\in\{1,2\}$ and $\beta\in (0,2)$ was solved in \cite{cast13},  the authors prove that the sequence 
$\big(n^{-1/\beta} (\log n)^{1/\beta -1}   Z_{[nt]} \big)_{t\geq 0}$ converges weakly  to a stable process with index $\beta$. Finally for any arbitrary transient $\mathbb{Z}^{d}$-random walk, it can be shown that the sequence $(n^{-\frac{1}{2}}Z_{n})_n$ is asymptotically normal (see for instance \cite{spi76} page 53). 

Far from being exhaustive, we can cite strong approximation results and laws of the iterated logarithm \cite{csa83,csa99,KL98}, limit theorems for correlated sceneries or walks \cite{GuiPri1,GuiPri2,coh09}, large and moderate deviations results \cite{ass07,Nina07, cast01, cast04}, ergodic and mixing properties (see the survey \cite{denhol06}). 

The problem we investigate in the present paper has already been studied in \cite{wend16} in the case where $(S_n)_{n\in\N}$ is a recurrent random walk in $\Z$ such that $(n^{-\frac 1\alpha}S_n)_{n\geq 1}$ converges in distribution to a stable distribution of index $\alpha$, with $\alpha\in(1,2]$.
In \cite{wend16}, the limit process differed from the limit process of the sequential empirical process of independent random variables. We will show that in other cases, we obtain the classical limit process for the sequential empirical process (as under independence).
Let us recall that a Kiefer-M\"uller process $W:=\big(W(s,t)\big)_{s,t\in[0,1]}$ is a centered two-parameter Gaussian process with covariances
\begin{equation*}
\E\left[W(s,t)W(s',t')\right]=t\wedge t'(s\wedge s'-ss').
\end{equation*}
The study of this sequential process has been initiated independently by M\"uller in \cite{mull70} and by Kiefer in \cite{Kiefer72}.
\begin{theorem}\label{theo1}
Assume that one of the following assumptions holds
\begin{itemize}
\item[(a)] $(S_n)_n$ is a transient random walk on $\Z^d$,
with $d\in \N^*$, $a_n:=\sqrt{n}$,
\item[(b1)] $d=1$, $(S_n/n)_n$ converges in distribution to a random variable with characteristic function $t\mapsto \exp(-A|t|)$
with $A>0$, $a_n:=\sqrt{n\log n}$,
\item[(b2)] $d=2$, the random walk increment $S_1$ is centered and square integrable with
invertible covariance matrix $\Sigma$ and $A:=2\sqrt{\det\Sigma}$,
 $a_n:=\sqrt{n\log n}$.
\end{itemize}
Then $\left(a_n^{-1} (W_n(s,t))_{s,t\in[0,1]}\right)_n$ converges in distribution 
in the Skorohod space
 $\D\left([0,1]^2,\R\right)$
of c\`adl\` ag functions
to $\left(\sqrt{c}W(s,t)\right)_{s,t\in[0,1]}$, where $W$ is a Kiefer-M\"uller process and
\begin{itemize}
\item $\displaystyle c=1+2\sum_{k=1}^\infty \P(S_k=0)$ in case (a).
\item $\displaystyle c=\frac{2}{\pi A}$ in cases (b1) and (b2).
\end{itemize}
\end{theorem}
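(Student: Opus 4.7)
The plan is to condition on the random walk and reduce $W_n(s,t)$ to a weighted sum of independent centered scenery variables. Writing $N_n(x) := \#\{1 \leq k \leq n : S_k = x\}$ for the local time and $\eta_x^{(s)} := \mathds{1}_{\{\xi_x \leq s\}} - s$, one has
\begin{equation*}
W_n(s,t) = \sum_{x \in \Z^d} N_{\lfloor nt\rfloor}(x)\,\eta_x^{(s)},
\end{equation*}
so, conditionally on $(S_k)_k$, the summands $\eta_x^{(s)}$ are independent across $x$ (though coupled across different $s$ through the same $\xi_x$), with known mean zero and variance $s(1-s)$. I will prove finite-dimensional convergence and tightness separately.

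For the finite-dimensional distributions, I would first verify the covariance. Since scenery and walk are independent,
\begin{equation*}
\E\bigl[W_n(s,t)W_n(s',t')\bigr] = \bigl((s\wedge s') - ss'\bigr)\sum_{k \leq nt,\, j \leq nt'} \P(S_k = S_j).
\end{equation*}
Using stationarity of the increments and the appropriate local limit theorem, the sum on the right is $\sim c\,a_n^2\,(t\wedge t')$, with $c = 1 + 2\sum_{k \geq 1}\P(S_k=0)$ in case (a), and, using $\P(S_k = 0)\sim \tfrac{1}{\pi A\,k}$, with $c = \tfrac{2}{\pi A}$ in cases (b1), (b2). This gives exactly the Kiefer-M\"uller covariance. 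For the CLT itself, I condition on the walk and apply a CLT for independent (non-identically distributed) summands $N_{\lfloor nt\rfloor}(x)\eta_x^{(s)}$: the Lindeberg condition reduces to $\max_x N_n(x)^2 / V_n \to 0$, where $V_n = \sum_x N_n(x)^2$ is the self-intersection local time, and this is known to hold in each of the three regimes. Joint convergence for several pairs $(s_i,t_i)$ follows from the same conditioning, since the covariance between different components is entirely encoded in $\sum_x N_{\lfloor ns_i\rfloor}(x) N_{\lfloor ns_j\rfloor}(x)$ and in the explicit covariance of $(\eta_x^{(s_i)})_i$ at a single site.

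For tightness in $\D([0,1]^2,\R)$, I would handle the $t$- and $s$-directions separately and combine via a grid argument. In the $t$-direction (at fixed $s$) one has a partial-sum process of centered variables; a moment bound of the form
\begin{equation*}
\E\bigl[(W_n(s,t') - W_n(s,t))^{2p}\bigr] \lesssim a_n^{2p}\,(t'-t)^p
\end{equation*}
for suitable $p$ can be obtained by counting self-intersections of the walk on $[\lfloor nt\rfloor, \lfloor nt'\rfloor]$, in the spirit of \cite{wend16} and Bolthausen's method for case (b2). In the $s$-direction, conditional on the walk the problem reduces to the classical bracketing of $\{\mathds{1}_{\{\cdot\leq s\}} : s \in [0,1]\}$, with the walk contribution showing up only through $V_n$, whose asymptotics are available.

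The main obstacle I anticipate is the tightness in the $t$-direction in the recurrent cases (b1), (b2). The normalization $a_n = \sqrt{n\log n}$ is borderline: self-intersections grow like $n\log n$ with non-negligible fluctuations, and controlling the $2p$-th moment of an increment requires sharp estimates on multi-fold intersection probabilities $\sum \P(S_{k_1} = \cdots = S_{k_{2p}})$, or an alternative martingale/block decomposition argument to decouple widely separated time scales. This is the delicate technical heart of the proof; cases (a) is much simpler because the walk is transient and the scenery increments behave almost like an iid sequence with bounded short-range dependence.
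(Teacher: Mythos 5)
Your finite-dimensional argument is essentially the paper's: condition on the walk, write $W_n(s,t)=\sum_x N_{\lfloor nt\rfloor}(x)\zeta_s(x)$, and prove a quenched CLT using $\sup_x N_n(x)=o(n^\varepsilon)$ a.s.\ (which gives your Lindeberg condition) together with the a.s.\ convergence $a_n^{-2}\sum_x N_{\lfloor nt\rfloor}(x)N_{\lfloor nt'\rfloor}(x)\to c\,(t\wedge t')$. The paper additionally needs the $L^2$-boundedness of $a_n^{-2}\sum_x N_n^2(x)$ (Lemmas \ref{lem1}, \ref{lem2}) to pass from a.s.\ convergence of the conditional characteristic function to convergence of its expectation; you should make that step explicit, but this part of your plan is sound.

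The tightness argument has a genuine gap. Handling the $s$- and $t$-directions ``separately'' does not suffice: the Bickel--Wichura modulus $w''_\delta$ involves $\sup_s$ of $t$-increments and $\sup_t$ of $s$-increments, so a fixed-$s$ moment bound in $t$ is not enough; and, more importantly, the $s$-direction cannot be closed by a moment/chaining bound on its own, because $\E[(\zeta_{s_1}(x)-\zeta_{s_2}(x))^4]\le|s_1-s_2|$ is only \emph{linear} in $|s_1-s_2|$, so the fourth moment of an $s$-increment of $a_n^{-1}W_n$ carries exponent $1$, not the $>1$ required. The paper's resolution is a single fourth-moment bound on the \emph{rectangular} increment,
\begin{equation*}
\E\Bigl[\Bigl(a_n^{-1}\sum_{i=n_1+1}^{n_2}\bigl(\zeta_{s_1}(S_i)-\zeta_{s_2}(S_i)\bigr)\Bigr)^4\Bigr]\le 2C_1\Bigl(\frac{n_2-n_1}{n}\Bigr)^{3/2}|s_1-s_2|^{3/2},
\end{equation*}
which exploits the product structure of the two small parameters and is valid only for $|s_1-s_2|\ge 1/n$; the scale $|s_1-s_2|<1/n$ is then handled by restricting to the grid $\{0,1/n,\dots,1\}$ and using monotonicity of $s\mapsto\mathds{1}_{\{\xi_x\le s\}}$ to bound the interpolation error by $4/a_n$. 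Your proposal contains neither the rectangular bound nor the grid/monotonicity step. Finally, the obstacle you anticipate (higher moments and multi-fold self-intersections in the recurrent cases) is not where the difficulty lies: fourth moments suffice, and the needed inputs are exactly $\E[\sum_xN_n^4(x)]\lesssim n\log^3n$ and $\E[(\sum_xN_n^2(x))^2]\lesssim n^2\log^2n$, which are already available (Lemma \ref{lem2}).
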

Note that the limit process is the same as under independence
of the observables $\xi_{S_n}$ (e.g. when $S_n=n$),
even if the norming is different in the cases (b1) and (b2). In contrast, for intermittent maps, Dedecker, Dehling and Taqqu \cite{dedecker} have shown that the same $\sqrt{n\log n}$ norming is needed, but the limit process behaves drastically different and is degenerate: As in the long range dependent case (see Dehling, Taqqu \cite{dehling}), the limit is degenerate, meaning that it can be expressed as $(c(s)Z(t))_{s,t\in[0,1]}$, where $c(s)$ is a deterministic function and $(Z(t))_{t\in[0,1]}$ is a stochastic process.
Note that even under short range dependence, the limit might be distorted, see Berkes and Philipp \cite{berkes}. In the case of a random walk in random scenery with $\alpha>1=d$, a much stronger norming is needed, but the limit is also not degenerate.

If we consider a random walk in random scenery $(X_{S_k})_{k\in\N}$ with random variables $(X_x)_{x\in \Z^d}$ not uniformly distributed on the interval $[0,1]$, the limit distribution of the sequential empirical process can still be deduced from Theorem \ref{theo1}. Let $F_{X}$ be the distribution function of the random variables $X_x$. Furthermore, let $(\xi_x)_{x\in \Z^d}$ be independent and uniformly distributed on $[0,1]$ as before. Then the sequential empirical process $(V_{s,t})_{s\in\R,t\in[0,1]}$ with
\begin{equation*}
V_n(s,t):=\sum_{k=1}^{\lfloor nt\rfloor}\big(\mathds{1}_{\{X_{S_k}\leq s\}}-F_X(s)\big)
\end{equation*}
has the same distribution as $(W_n(F_X(s),t))_{s\in\R,t\in[0,1]}$ with $W_n$ defined in (\ref{defineprocess}). To see this,  define the quantile function
\begin{equation*}
F_{X}^{-1}(s):=\inf\left\{x\big|F_{X}(x)\geq s\right\}.
\end{equation*}
It is well known that the quantile function satisfies $F_{X}^{-1}(s)\leq s'$ if and only if $s\leq F_X(s')$ (see e.g. the book of Billingsley \cite{bill99}, chapter 14). So 
\begin{equation*}
W_n(F_X(s),t)=\sum_{k=1}^{\lfloor nt\rfloor}\big(\mathds{1}_{\{\xi_{S_k}\leq F_X(s)\}}-F_X(s)\big)=\sum_{k=1}^{\lfloor nt\rfloor}\big(\mathds{1}_{\{F_X^{-1}(\xi_{S_k})\leq s\}}-F_X(s)\big)
\end{equation*}
and $\P(F_X^{-1}(\xi_{x})\leq s)=\P(\xi_x \leq F_X(s))=F_X(s)=\P(X\leq s)$, so the random variables $(F^{-1}_X(\xi_{x}))_{x\in\Z^d}$ are independent with distribution function $F_X$. So it suffices to study the case where the scenery is uniformly distributed on $[0,1]$.

\section{Applications}

\subsection{Degenerate $U$-Statistics} There is a substantial amount of work for $U$-statistics indexed by a random walk, starting with Cabus and Guillotin-Plantard \cite{cab} for a degenerate $U$-statistic and a two-dimensional random walk. Also in the degenerate case, Guillotin-Plantard and Ladret \cite{guil} study one dimensional random walks with $\alpha>1$. Non-degenerate $U$-statistics are investigated by Franke, P\`ene and Wendler \cite{franke}. 
Theorem \ref{theo1} gives an alternative proof of Theorems 1.1 and 5.1 in \cite{cab} in the case of degenerate $U$-statistics indexed by a random walk if the kernel has bounded total variation. The arguments can be found in Dehling, Taqqu \cite{dehling}. For sake of completeness, we give the proof of the convergence of the one-dimensional distributions. Convergence of the finite-dimensional distributions and tightness are omitted.

Let $h:[0,1]\times[0,1] \rightarrow\R$ be a symmetric function with bounded total variation. We study the statistic
\begin{equation*}
U_n(h):=\frac{1}{n^2}\sum_{i,j=1}^n h(\xi_{S_i},\xi_{S_j})
\end{equation*}
where $(S_{n})_{n}$ is a $\Z^{d}$-random walk satisfying either (a) or (b2) from Theorem \ref{theo1} and $(\xi_{x})_{x\in\Z^{d}}$ is defined as in the introduction.
If  $h$ is degenerate, meaning that $\E h(x,\xi_1)=\int_0^1 h(x,y)\, dy=0$ for all $x\in\R$, we get the following expansion using the distribution function $F(s)=s$ and the empirical distribution function $F_n(s):=\frac{1}{n}\sum_{i=1}^n\mathds{1}_{\{\xi_{S_i}\leq s\}}$:
\begin{align*}
&U_n(h)-\E\left[h\big(\xi_1,\xi_2\big)\right]\\
=&\int_0^1\!\!\!\int_0^1h(x,y)dF_n(x)dF_n(y)-\int_0^1\!\!\!\int_0^1h(x,y)dF(x)dF(y)\\
=&\int_0^1\!\!\!\int_0^1\!\!h(x,y)d(F_n\!-\!F)(x)d(F_n\!-\!F)(y)+2\int_0^1\!\!\!\int_0^1\!\!h(x,y)dF(x)d(F_n\!-\!F)(y).
\end{align*}
The second integral equals 0 because of the degeneracy, and using integration by parts, we obtain
\begin{multline*}
U_n(h)-\E\left[h\big(\xi_1,\xi_2\big)\right]=\int_0^1\!\!\!\int_0^1\!(F_n\!-\!F)(x)(F_n\!-\!F)(y)dh(x,y)\\
=\frac{a_n^2}{n^2}\int_0^1\!\!\!\int_0^1\!a_n^{-1}W_n(x,1)a_n^{-1}W_n(y,1)dh(x,y).
\end{multline*}
So we conclude that the $U$-statistic converges in distribution.

\subsection{Testing for Stationarity of the Scenery}

There is a growing interest in change point analysis and there are various tests for the hypothesis of stationarity against the alternative of a change of the distribution of a time series.  While most of the test prespecify the type of change, e.g. a change in location or in scale, various authors have proposed more general change point tests, which can detect any possible change in the distribution function.

Carlstein \cite{carlstein} proposed different tests for change in distribution of independent random variables. A test under short range dependence was developed by Inoue \cite{inoue}. Giraitis, Leipus and Surgailis \cite{giraitis} and Tewes \cite{tewes} have studied this problem under long range dependence. In the long range dependent case, an interesting phenomenon can appear: The general test for a change in distribution can have the same asymptotic power under a change in mean as the classical CUSUM test, which is specialized to detect a shift in mean, see \cite{tewes}.
If the scenery is not stationary, the random walk in random scenery might be non-stationary. Especially in the transient case, if the distribution of the scenery is different in different regions, this should be observable, because the random walk will pass this different regions. Following Inoue \cite{inoue}, we propose the test statistic
\begin{equation*}
T_n:=\max_{1\leq k <n}\sup_{s\in\R}\bigg|\sum_{i=1}^k\mathds{1}_{\{X_{S_i}\leq s\}}-\frac{k}{n}\sum_{i=1}^n\mathds{1}_{\{X_{S_i}\leq s\}}\bigg|.
\end{equation*}
This statistic compares the empirical distribution function of the first $k$ observed values with the empirical distribution function of all observed values (taking the maximum over all $k\leq n$). Under the alternative, it is sensitive to a change of the distribution when the random walk goes to different regions. Under the hypothesis of a stationary scenery, we get the asymptotic distribution of the test statistic by using the continuous mapping theorem:
\begin{multline*}
a_n^{-1}T_n=\sup_{t\in[0,1]}\sup_{s\in\R}\Big|a_n^{-1}V_n(s,t)-a_n^{-1}tV_n(s,1)\Big|\\
\Rightarrow \sqrt{c}\sup_{t\in[0,1]}\sup_{s\in\R}\Big|W(F_X(s),t)-tW(F_X(s),1)\Big|
\end{multline*}
A continuous distribution function $F_X$ takes every value $x\in[0,1]$ by the intermediate value theorem. So in this case, we recognize that the supremum above is the supremum of the Brownian pillow $(W(s,t)-tW(s,1))_{s,t\in[0,1]}$.

\section{Proof}
\subsection{Recalls and auxiliary results}
We define the occupation times as $N_n(x):=\sum_{i=1}^n\mathds{1}_{\{S_i=x\}}$. 
Assume that the random walk satisfies one of the hypotheses of Theorem \ref{theo1}, then for any $\varepsilon >0$,
\begin{equation}\label{Nn*}
\sup_{x\in\Z^d}N_n(x)=o(n^{\varepsilon})\ \ \text{a.s.}
\end{equation}
(see the proof of Lemma 2.5 in \cite{bolt89}).\\ 
Moreover
\begin{equation}\label{sumNn^2}
\lim_{n\rightarrow \infty}\frac 1{a_n^2}\sum_{x\in\Z^d} N_{n}^2(x)=c\ \  \text{a.s.},
\end{equation}
where
\begin{itemize}
\item $c=1+2\sum_{n\ge 1}\P(S_n=0)$ in Case (a) (see the introduction of \cite{kest79}), 
\item $c=2/\pi A$ in Case (b) (see \cite{cab,cerny07, degli11}).
\end{itemize}
\begin{lemma}\label{lem0} Under the assumptions of Theorem \ref{theo1}, for every $a<b$,
\begin{equation}\label{inter}
\sum_{k=0}^{\lfloor an\rfloor}\sum_{l=\lfloor an\rfloor+1}^{\lfloor bn\rfloor} \mathds{1}_{\{S_k =S_l\}}=o\left( a_n^2\right)\, \quad a.s..
\end{equation}
\end{lemma}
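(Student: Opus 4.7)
Plan: The proof starts from the algebraic identity
\[
\sum_{x\in\Z^d} N_{\lfloor bn\rfloor}(x)^2 \;=\; \sum_{x\in\Z^d} N_{\lfloor an\rfloor}(x)^2 \;+\; \sum_{x\in\Z^d}\bigl(N_{\lfloor bn\rfloor}(x)-N_{\lfloor an\rfloor}(x)\bigr)^2 \;+\; 2 L_n,
\]
where $L_n$ denotes the double sum in (\ref{inter}); the identification with the ``cross'' term $\sum_x N_{\lfloor an\rfloor}(x)(N_{\lfloor bn\rfloor}(x)-N_{\lfloor an\rfloor}(x))$ is exact apart from the single $k=0$ contribution $N_{\lfloor bn\rfloor}(0)-N_{\lfloor an\rfloor}(0)$, which is $o(n^{\varepsilon})=o(a_n^2)$ by (\ref{Nn*}). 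Writing $A_n,B_n,C_n$ for the three squared-occupation terms on the right, the lemma reduces to establishing the almost sure limits $A_n/a_n^2\to cb$, $B_n/a_n^2\to ca$, and $C_n/a_n^2\to c(b-a)$.

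The first two are an immediate consequence of (\ref{sumNn^2}) applied at times $\lfloor bn\rfloor$ and $\lfloor an\rfloor$, combined with the elementary scaling $a_{\lfloor tn\rfloor}^2/a_n^2\to t$ for every $t>0$ (trivial for $a_n=\sqrt n$; easily verified in cases (b1), (b2) from $a_n^2=n\log n$).

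The third limit is the crux. By stationarity of the increments and the Markov property of $(S_n)_n$, the shifted walk $\tilde S^{(n)}_j:=S_{\lfloor an\rfloor+j}-S_{\lfloor an\rfloor}$ has the same law as $S$, and $C_n$ coincides with the self-intersection sum of $\tilde S^{(n)}$ up to time $m_n:=\lfloor bn\rfloor-\lfloor an\rfloor\sim(b-a)n$; applying (\ref{sumNn^2}) to $\tilde S^{(n)}$ and using $a_{m_n}^2/a_n^2\to b-a$ delivers the limit in distribution (hence in probability, the limit being deterministic). Upgrading this to an almost sure statement is the principal difficulty, since the shifted walk depends on $n$. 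My plan is to work directly on the nonnegative $L_n$ and compute
\[
\E L_n=\sum_{k=0}^{\lfloor an\rfloor}\sum_{l=\lfloor an\rfloor+1}^{\lfloor bn\rfloor}\P(S_{l-k}=0),
\]
which, combined with the local limit theorem estimates of $\P(S_j=0)$ (summable in case (a); of order $1/j$ in cases (b1), (b2)), yields $\E L_n=O(n)=o(a_n^2)$ in all three regimes. A four-point Markov decomposition then bounds $\E L_n^2$, giving $\Var(L_n)=o(a_n^4)$ with enough margin to apply Chebyshev and Borel--Cantelli along a geometric subsequence $n_k:=\lfloor r^k\rfloor$, so that $L_{n_k}/a_{n_k}^2\to 0$ a.s.\ along that subsequence. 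The intermediate indices are handled by sandwiching $L_n$, for $n_k\leq n\leq n_{k+1}$, between cross-sums over enlarged windows indexed by $(n_k,n_{k+1})$, which obey the same estimate up to the bounded factor $a_{n_{k+1}}^2/a_{n_k}^2$.

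The main obstacle is the four-point moment computation: one has to sum $\P(S_{k_1}=S_{l_1},S_{k_2}=S_{l_2})$ over pairs $(k_i,l_i)$ in the two blocks, split by the order of the four indices, reduce via independence of disjoint increments to integrals of products of local probabilities, and close the estimate using the local limit theorem separately in the transient and in the two critical recurrent regimes, with a bound tight enough to beat $a_n^4$.
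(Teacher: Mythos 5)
For this lemma the paper offers no argument of its own: it simply invokes Proposition 2.3 of \cite{cab}. Your proposal is therefore judged on its own terms, and as it stands it is a programme rather than a proof. The decisive step --- the second-moment bound on $L_n$ (equivalently the four-point estimate on $\P(S_{k_1}=S_{l_1},S_{k_2}=S_{l_2})$) that would make Chebyshev--Borel--Cantelli work along $n_k=\lfloor r^k\rfloor$ --- is exactly the analytic content of the lemma, and you leave it entirely unexecuted. It is genuinely needed: the first moment alone gives only convergence in probability, since in cases (b1)--(b2) Markov's inequality yields $\P(L_{n_k}>\epsilon a_{n_k}^2)=O(1/\log n_k)=O(1/k)$, which is not summable. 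Moreover your first-moment claim ``$\E L_n=O(n)=o(a_n^2)$ in all three regimes'' is wrong in case (a), where $a_n^2=n$: there $O(n)$ is not $o(a_n^2)$, and one must prove the sharper $\E L_n=o(n)$. This is true --- writing $\E L_n\leq\sum_{j\geq 1}\min(j,Cn)\P(S_j=0)$ and using $\sum_j\P(S_j=0)<\infty$ together with a Ces\`aro/tail argument does it --- but it has to be argued, not asserted.

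A second concrete gap is the interpolation between subsequence points. The enlarged cross-sum $\tilde L_k:=\sum_{i=0}^{\lfloor an_{k+1}\rfloor}\sum_{j=\lfloor an_k\rfloor+1}^{\lfloor bn_{k+1}\rfloor}\mathds{1}_{\{S_i=S_j\}}$ does dominate $L_n$ for $n_k\leq n\leq n_{k+1}$, but its two index blocks overlap on $(\lfloor an_k\rfloor,\lfloor an_{k+1}\rfloor]$, so it contains the diagonal terms $i=j$ there, contributing $\lfloor an_{k+1}\rfloor-\lfloor an_k\rfloor\asymp a(r-1)n_k$ (plus near-diagonal terms of the same order). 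In case (a) this is of order $a_{n_k}^2$, so the enlarged sums emphatically do \emph{not} ``obey the same estimate'' $o(a_{n_k}^2)$; at best one gets $\limsup_n L_n/a_n^2\leq C(r-1)$ and must let $r\downarrow 1$, which in turn requires all your moment bounds to be quantitative in $r$. Finally, note that the opening decomposition $A_n=B_n+C_n+2L_n'$ plays no role in the argument you actually propose (you abandon the route through $C_n$, correctly, since an almost sure lower bound for $C_n$ is no easier than the lemma itself); it could be deleted. In short: the skeleton is sensible and consistent with how such statements are proved in the literature, but the proof is not there.
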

\begin{proof} See Proposition 2.3 in \cite{cab}.
\end{proof}
As a consequence of \eqref{sumNn^2} and of Lemma \ref{lem0}, we obtain
\begin{equation}\label{sumNnst}
 \forall s,t>0, \quad  \lim_{n\rightarrow +\infty} a_n^{-2}\sum_{x\in\mathbb Z^d} N_{\lfloor nt\rfloor}(x)N_{\lfloor ns\rfloor}(x)=c\, \min(s,t)\quad a.s.\, .
\end{equation}
We will proceed with some moment bounds for the occupation times:

\begin{lemma}\label{lem1} Let $(S_n)_{n\in\N}$ be a transient random walk in $\Z^d$, then there exists some constant $C$, such that for all $n\geq 1$
\begin{align*}
\E\bigg[\sum_{x\in\Z^d}N_{n}^4(x)\bigg]&\leq C n.\\
\E\bigg[\Big(\sum_{x\in\Z^d}N_{n}^2(x)\Big)^2\bigg]&\leq C n^2.
\end{align*}
\end{lemma}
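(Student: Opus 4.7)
The plan is to expand the fourth moment into a sum over four-tuples of time indices, then use independence of increments together with transience to reduce the estimate to a finite sum involving Green's function at the origin. More precisely, expanding $N_n^4(x)=\sum_{i_1,i_2,i_3,i_4=1}^{n}\mathds{1}_{\{S_{i_1}=S_{i_2}=S_{i_3}=S_{i_4}=x\}}$ and summing over $x\in\Z^d$ collapses the site sum, giving
\[
\E\Big[\sum_{x\in\Z^d} N_n^4(x)\Big]=\sum_{i_1,i_2,i_3,i_4=1}^{n}\P(S_{i_1}=S_{i_2}=S_{i_3}=S_{i_4}).
\]
Since the summand is symmetric in the $i_k$, one bounds the total by $4!$ times the sum over ordered tuples $1\leq i_1\leq i_2\leq i_3\leq i_4\leq n$ (the slight overcounting of tuples with ties only worsens the constant).

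For an ordered tuple, independence and stationarity of increments give the factorisation $\P(S_{i_1}=\cdots=S_{i_4})=\prod_{k=1}^{3}\P(S_{i_{k+1}-i_k}=0)$. Changing variables to $a=i_1\geq 1$ and $b_k=i_{k+1}-i_k\geq 0$, and dropping the constraint $a+b_1+b_2+b_3\leq n$ on the inner sums, yields the bound
\[
\E\Big[\sum_{x\in\Z^d} N_n^4(x)\Big]\leq 24\, n\, G^3,\qquad G:=\sum_{j\geq 0}\P(S_j=0).
\]
The quantity $G$ is the Green's function of the walk evaluated at $0$; it is finite precisely because $(S_n)$ is transient. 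This delivers the first inequality with $C=24G^3$.

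For the second inequality I would not carry out a new four-fold expansion, but reduce it to the first bound by a Cauchy--Schwarz argument exploiting the fact that in $n$ steps the walk visits at most $n$ distinct sites, i.e.\ $\#\{x:N_n(x)>0\}\leq n$. Writing $\sum_x N_n^2(x)=\sum_x N_n^2(x)\,\mathds{1}_{\{N_n(x)>0\}}$ and applying Cauchy--Schwarz with respect to the (almost surely finite) counting measure on the support gives
\[
\Big(\sum_{x\in\Z^d} N_n^2(x)\Big)^2\leq \#\{x:N_n(x)>0\}\cdot\sum_{x\in\Z^d} N_n^4(x)\leq n\sum_{x\in\Z^d} N_n^4(x),
\]
and taking expectations together with the first estimate concludes. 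There is no genuine obstacle here; the only mild subtlety is the bookkeeping in passing from unordered to ordered index tuples, and the conceptual point that transience is used exactly to make $G<\infty$.
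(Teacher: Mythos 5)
Your proof is correct, and it is essentially self-contained where the paper simply defers to item (i) of Proposition 2.3 in Guillotin-Plantard--Poisat together with the observation that $N_\infty(0)$ is geometric, so that $\sup_n\E[N_n(0)^k]<\infty$ for every $k$. The underlying mechanism is the same in both cases -- transience is used exactly to make the Green's function $G=\sum_{j\ge0}\P(S_j=0)$ finite -- but your route to the first bound (expand $\sum_x N_n^4(x)$ over $4$-tuples of times, symmetrize to weakly increasing tuples at the cost of a factor $4!$, factor via independent stationary increments, and decouple the gaps to get $24\,nG^3$) is the direct combinatorial computation rather than an appeal to moments of $N_\infty(0)$; all steps check out, including the harmless treatment of ties (a gap $b_k=0$ contributes the factor $\P(S_0=0)=1$). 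Where you genuinely diverge is the second inequality: instead of a second fourth-order expansion of $\big(\sum_x N_n^2(x)\big)^2$ (which forces one to handle pairings $\P(S_{i_1}=S_{i_2},\,S_{i_3}=S_{i_4})$ whose constrained indices need not be adjacent in the time-ordering), you use Cauchy--Schwarz on the support of $N_n$ together with the deterministic bound $\#\{x:N_n(x)>0\}\le n$, reducing the second claim to the first. This is a clean shortcut that buys a shorter argument at no cost here, since the resulting bound $n\cdot Cn=Cn^2$ is exactly the order required; the only thing it gives up is the sharper constant one could extract from a direct covariance computation, which the lemma does not need.
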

\begin{proof} 
We can follow the proof of item (i) of Proposition 2.3 in \cite{guil13} using the fact that, for all $k\in\N$, 
$\sup_n \E[ N_n(0)^k] = \E[N_{\infty}(0)^k]<+\infty$ since $N_{\infty}(0)$ has Geometric distribution with parameter $\P(S_n\neq 0\ \text{forall}\ n\geq 1)>0$ (see also Lemma 7 and 8 in \cite{GuiRen}).

\end{proof}

\begin{lemma}\label{lem2} Let $(S_n)_{n\in\N}$ be a recurrent random walk in $\Z^d$ such that $(n^{-\frac 1d}S_n)_n$ converges in distribution to a stable distribution of index $d\in\{1,2\}$, then for some constants $C_1,C_2\in(0,\infty)$
\begin{align*}
\E\bigg[\sum_{x\in\Z^d}N_{n}^4(x)\bigg]&\leq C_1 n \log^3(n).\\
\E\bigg[\Big(\sum_{x\in\Z^d}N_{n}^2(x)\Big)^2\bigg]&\leq C_2 n^2 \log^2(n).
\end{align*}
\end{lemma}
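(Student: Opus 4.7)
The plan is to expand both moments as $4$-fold index sums and estimate the resulting joint return probabilities by a local limit theorem. I would start from
\begin{equation*}
\sum_{x\in\Z^d}N_n^4(x)=\sum_{i_1,\ldots,i_4=1}^n\mathds{1}_{\{S_{i_1}=S_{i_2}=S_{i_3}=S_{i_4}\}}
\end{equation*}
and
\begin{equation*}
\Big(\sum_{x\in\Z^d}N_n^2(x)\Big)^2=\sum_{i_1,\ldots,i_4=1}^n\mathds{1}_{\{S_{i_1}=S_{i_2},\,S_{i_3}=S_{i_4}\}},
\end{equation*}
so that after taking expectations both bounds reduce to summing probabilities of the form $\P(S_{k_1}=\cdots=S_{k_r})$. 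The essential input is the local limit theorem for aperiodic recurrent random walks attracted to a stable law of index $d\in\{1,2\}$: writing $p(k):=\P(S_k=0)$ and $p^*(k):=\sup_{x\in\Z^d}\P(S_k=x)$, one has $p(k)\le p^*(k)\le C/(k\vee 1)$.

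For the first bound, I would use the $24$-fold permutation symmetry to reduce to ordered tuples $i_1\le i_2\le i_3\le i_4$. Independence of increments then gives $\P(S_{i_1}=S_{i_2}=S_{i_3}=S_{i_4})=p(i_2-i_1)\,p(i_3-i_2)\,p(i_4-i_3)$. After changing variables to the three gaps $a,b,c\ge 0$, summation over $i_1$ contributes a factor $\le n$, and the residual is
\begin{equation*}
\sum_{\substack{a,b,c\ge 0\\ a+b+c\le n}}\frac{C^3}{(a\vee 1)(b\vee 1)(c\vee 1)}\le C'(\log n)^3,
\end{equation*}
yielding $\E\bigl[\sum_x N_n^4(x)\bigr]\le C_1 n\log^3 n$.

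For the second bound, I would fix $i_1<i_2$ and $i_3<i_4$ (at cost of a constant factor) and split into three interleaving patterns: (I) the two pairs are disjoint; (II) one pair is strictly nested inside the other; (III) $i_1<i_3<i_2<i_4$ (properly interleaved). In case (I), the probability factorises as $p(i_2-i_1)\,p(i_4-i_3)$, which after summation gives the dominant contribution $O(n^2\log^2 n)$. In case (II), with inner/outer gaps $a,b,c$, it equals $p(a+c)\,p(b)$, contributing only $O(n^2\log n)$. In case (III), introducing the independent gap increments $X,Y,Z$ of lengths $i_3-i_1,\,i_2-i_3,\,i_4-i_2$, the event becomes $\{X+Y=0,\,Y+Z=0\}$ and I would use
\begin{equation*}
\P(X+Y=0,\,Y+Z=0)=\sum_{y}\P(X=-y)\P(Y=y)\P(Z=-y)\le p^*(i_4-i_2)\,p(i_2-i_1)
\end{equation*}
by pulling $\sup_y\P(Z=-y)$ out of the sum; this case also contributes $O(n^2\log n)$. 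Combining the three pieces yields the announced $C_2 n^2\log^2 n$.

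The main obstacle is the properly interleaved case (III), whose joint law does not factorise into one-step return probabilities, so one is forced to use the uniform local limit bound $p^*(k)\le C/k$ rather than only its diagonal version $p(k)\le C/k$. Everything else is routine index bookkeeping combined with the harmonic estimate $\sum_{k=1}^n 1/k=O(\log n)$.
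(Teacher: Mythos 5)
Your proof is correct and is essentially the argument behind Proposition 2.3 of Guillotin-Plantard and Poisat \cite{guil13}, which is all the paper offers for this lemma: expansion of both moments into self-intersection sums, the uniform local limit bound $\sup_{x}\P(S_k=x)\leq C/(k\vee 1)$ valid when the normalization is $n^{1/d}$ with stability index equal to $d$, and a case analysis by interleaving pattern in which the properly interleaved configuration is handled exactly as you do, via $\P(X+Y=0,\,Y+Z=0)\leq p^*(i_4-i_2)\,p(i_2-i_1)$. The only point you gloss over is the contribution of the diagonal terms $i_1=i_2$ or $i_3=i_4$ in the second moment: these are not literally absorbed by ``a constant factor'' since the corresponding constraint becomes vacuous, but they contribute at most $O(n^2\log n)$ and are harmless.
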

\begin{proof} See Proposition 2.3 in \cite{guil13}.
\end{proof}
As usual, our proof will be divided in two steps: we will prove the convergence of 
the finite-dimensional distributions in Section \ref{fdd} and, then, we will prove the tightness in Section \ref{tight}.

\subsection{Convergence of the finite-dimensional distributions}\label{fdd}

We introduce the following notation for $x\in\Z^d$ and $s\in[0,1]$:
$$\zeta_s(x):=\mathds 1_{\xi_x\leq s}-s.$$
We have
to prove for any $m,k\in\N^{*}$, $s_1,\ldots,s_k\in[0,1]$ and $t_1,\ldots,t_m\in[0,1]$ the convergence in distribution of 
\begin{equation*}
\Bigg(\frac 1{a_n}\Big(\sum_{x\in\Z^d} N_{\lfloor nt_j \rfloor}(x)\zeta_{s_i}(x)\Big)_{\substack{i=1,\ldots,k\\j=1,...,m}} \Bigg)_n
\end{equation*}
 to the random vector $(W(s_i,t_j))_{i=1,\ldots,k,\  j=1,...,m}$. Let us fix $s_1,\ldots,s_k\in[0,1]$, $t_1,\ldots,t_m\in [0,1]$ and $\theta_{i,j}\in\R$ for $i=1,\ldots,k$, $j=1,...,m$.
Let $\varphi_n$ denote the characteristic function of the previous vector and $\mathcal{F}$ the $\sigma-$field generated by the random walk. Using the independence between the random scenery and the random walk, a simple computation gives
\begin{align*}
&\varphi_n\big((\theta_{i,j})_{i=1,\ldots,k,j=1,\ldots,m}\big)\\
&= \E\bigg[ \prod_{x\in \Z^d}\E \bigg[\exp\Big(i\frac {1}{a_n}\sum_{i=1}^k\sum_{j=1}^m\theta_{i,j}N_{\lfloor nt_j\rfloor }(x)\zeta_{s_i}(x)\Big)\Big|  \mathcal{F}\bigg]\bigg]\displaybreak[0]\\
&= \E\bigg[ \prod_{x\in \Z^d}\varphi_{s_1,...,s_k}
 \Big(\frac 1{a_n}\sum_{j=1}^m\theta_{1,j} N_{\lfloor nt_j\rfloor }(x),\ldots,\frac 1{a_n} \sum_{j=1}^m \theta_{k,j} N_{\lfloor nt_j\rfloor }(x)\Big)\bigg],
\end{align*}
with $\varphi_{s_1,...,s_k}$ the characteristic function of $(\zeta_{s_1}(0),\ldots,\zeta_{s_k}(0))$. 
Denote by $U_n(x)$ the random vectors defined by 
$$ \frac 1{a_n}\Big(\sum_{j=1}^m\theta_{1,j}N_{\lfloor nt_j\rfloor }(x),\ldots,\sum_{j=1}^m \theta_{k,j} N_{\lfloor nt_j\rfloor }(x)\Big), \ x\in \Z^d$$
and $\Sigma=(\sigma_{i,i'})_{i,i'=1,...,k}$ the covariance matrix of  $(\zeta_{s_1}(0),...,\zeta_{s_k}(0))$.
We firstly prove that 
$$ \E\bigg[ \prod_{x\in \Z^d}\varphi_{s_1,...,s_k}
 \Big(U_n(x)\Big)\bigg] - \E\bigg[\prod_{x\in\Z^d} e^{-\frac {1}{2}\langle \Sigma U_n(x),U_n(x)\rangle} \bigg]\xrightarrow{n\rightarrow\infty} 0.$$
Note that the above products, although indexed by $x\in\mathbb{Z}^d$, have
only a
finite number of factors different from $1$.
And furthermore, all factors are complex numbers in $\bar{\mathbb{D}} =
\left
\{z\in\mathbb{C}\mid  |z|\leq1\right\}$.
We use the following inequality: Let $(z_i)_{i\in I}$ and $(z_i')_{i\in
I}$ be two families of complex numbers in $\bar{\mathbb{D}}$ such
that all
terms are equal to one, except a finite number of them. Then
\[
\Biggl|\prod_{i\in I}z'_i -\prod_{i\in I}z_i\Biggr| \leq\sum_{i\in I}
|z'_i - z_i|.
\]
This yields
\begin{align}\label{diff-car}
&\bigg| \E\bigg[ \prod_{x\in \Z^d}\varphi_{s_1,...,s_k}
 \Big(U_n(x)\Big)\bigg]   -\E\bigg[\prod_{x\in\Z^d} e^{-\frac {1}{2}\langle \Sigma U_n(x),U_n(x)\rangle} \bigg]\bigg|\nonumber\\
\leq& \sum_{x\in\Z^d}\E\bigg[\Big| \varphi_{s_1,...,s_k} \Big(U_n(x)\Big) - e^{-\frac {1}{2}\langle \Sigma U_n(x),U_n(x)\rangle} \Big|\bigg].
 \end{align}
Note that the random variables $\zeta_{s_1}(0),\ldots,\zeta_{s_k}(0)$ are bounded and therefore $\varphi_{s_1,...,s_k}(u)=e^{-\frac 12\langle \Sigma u,u\rangle}+o(|u|_\infty^2)$
as $u\rightarrow 0$,
with $|u|_\infty=\max\{|u_1|,\ldots,|u_k|\}$. We denote by $g$ the continuous and bounded function defined on $\R^k$ by $g(0)=0$ and $$g(u) =  |u|_{\infty}^{-2} \left|\varphi_{s_1,...,s_k} (u) - e^{-\frac 12\langle \Sigma u,u\rangle}\right|
\le 2/|u|_{\infty}^{2} 
$$
so that
$$\bigg| \varphi_{s_1,...,s_k} \Big(U_n(x)\Big) - e^{-\frac {1}{2}\langle \Sigma U_n(x),U_n(x)\rangle} \bigg| = \left|U_n(x)\right|_{\infty}^{2} g(U_n(x)).$$
Let us define $U_n =\max_{x\in\Z^d} \left|U_n(x)\right|_{\infty}$ and the function $\tilde{g}:[0,+\infty)\rightarrow [0,+\infty)$ by 
$\tilde{g}(u) =\sup_{|v|_{\infty}\leq u} |g(v)|.$
Note that $\tilde{g}$ is continuous, vanishes at 0
and is bounded.
Then, for any $x\in\Z^{d}$,
\begin{equation}\label{approx-car}
\bigg| \varphi_{s_1,...,s_k} \Big(U_n(x)\Big) - e^{-\frac {1}{2}\langle \Sigma U_n(x),U_n(x)\rangle} \bigg| \leq  \left|U_n(x)\right|_{\infty}^{2} \tilde{g}(U_n).
\end{equation}
Equations (\ref{diff-car}) and (\ref{approx-car}) together yield
\begin{align}\label{estim}
&\bigg| \E\bigg[ \prod_{x\in \Z^d}\varphi_{s_1,...,s_k}
 \Big(U_n(x)\Big)\bigg]   -\E\bigg[\prod_{x\in\Z^d} e^{-\frac {1}{2}\langle \Sigma U_n(x),U_n(x)\rangle} \bigg]\bigg|\nonumber\\
\leq& \E\bigg[\tilde{g}(U_n) \sum_{x\in\Z^d}|U_n(x)|_\infty^2 \bigg]\nonumber\\
\leq&m^2 \max_{i,j} \left|\theta_{i,j}\right|^2 \E\bigg[\tilde{g}(U_n) \bigg(\frac{1}{a_n^2} \sum_{x\in\Z^d} N_n(x)^2 \bigg)\bigg].
 \end{align}
Due to \eqref{Nn*}, $U_n$ converges almost surely to 0 as $n$ goes to infinity. Since $\tilde{g}$ is continuous and vanishes at 0, $\tilde{g}(U_n)$ converges almost surely to 0. 
Using \eqref{sumNn^2}, the second term in the expectation of \eqref{estim} converges almost surely to some constant. Moreover, from Lemma \ref{lem1} respectively Lemma \ref{lem2}, we 
know that this term is also bounded in $L_2$. Since $\tilde{g}$ is bounded, we can conclude that
\begin{equation*}
\varphi_n\big((\theta_{i,j})_{i,j}\big)-\E\Big[\prod_{x\in\Z^d} e^{-\frac {1}{2}\langle \Sigma U_n(x),U_n(x)\rangle}\Big]\xrightarrow{n\rightarrow\infty} 0.
\end{equation*}
Now, due to \eqref{sumNnst}, we obtain
\begin{eqnarray*}
\lim_{n\rightarrow \infty}\varphi_n((\theta_{i,j})_{i,j})&=&\exp\Big(-\frac c2\sum_{i,i'=1}^k\sum_{j,j'=1}^m \theta_{i,j}\theta_{i',j'} \ (t_j\wedge t_{j'}) \sigma_{i,i'}\Big)\\
&=& \E\bigg[ \exp\bigg(i \sum_{i=1}^k\sum_{j=1}^m \theta_{i,j}\sqrt{c}W(s_i,t_j) \bigg)\bigg]
\end{eqnarray*}
by remarking that $\sigma_{i,i'} = s_i \wedge s_{i'} -s_i s_{i'}$.

\subsection{Tightness}\label{tight}

The proof of the tightness follows in the same way as in \cite{wend16}.
Recall that we assume that $(\xi_x)_{x\in\Z^d}$ are uniformly distributed on the interval $[0,1]$. In this situation, we have for all $x\in\Z^d$ and $s_1,s_2\in[0,1]$
\begin{align*}
\E\left[\big(\zeta_{s_1}(x)-\zeta_{s_2}(x)\big)^2\right]&\leq |s_1-s_2|,\\
\E\left[\big(\zeta_{s_1}(x)-\zeta_{s_2}(x)\big)^4\right]&\leq |s_1-s_2|.
\end{align*}
Now, using inequalities from Lemmas \ref{lem1} and \ref{lem2}, we obtain the following moment bound for all $n_1< n_2\leq n$ and $s_1,s_2\in[0,1]$ with $|s_1-s_2|\geq 1/n$:§
\begin{align*}
 &\E\left[\bigg(a_n^{-1}\sum_{i=n_1+1}^{n_2}\big(\zeta_{s_1}(S_i)-\zeta_{s_2}(S_i)\big)\bigg)^4\right]\\
=&\E\left[\bigg(a_n^{-1}\sum_{x\in\Z^d}N_{n_2-n_1}(x)\big(\zeta_{s_1}(x)-\zeta_{s_2}(x)\big)\bigg)^4\right]\\
\leq & \E\bigg[\big(\zeta_{s_1}(0)-\zeta_{s_2}(0)\big)^4\bigg] \E\bigg[a_n^{-4} \sum_{x\in\Z^d} N_{n_2-n_1}^4(x)\bigg]\\
\ & +\E\bigg[\big(\zeta_{s_1}(0)-\zeta_{s_2}(0)\big)^2\bigg]^2 \E\bigg[\bigg(a_n^{-2} \sum_{x\in\Z^d} N_{n_2-n_1}^2(x)\bigg)^2\bigg] \\
\leq&\frac{C_1}{a_n^4}\left[a_{n_2-n_1}^2\log^2 (n_2-n_1)|s_1-s_2|+a_{n_2-n_1}^{4}(s_1-s_2)^2\right]\\
\leq&{C_1}\left[\frac{n_2-n_1}{n^2}\log (n_2-n_1)|s_1-s_2|+\left( \frac{n_2-n_1}n\right)^2(s_1-s_2)^2\right]\\
\leq&2 C_1\Big(\frac{n_2-n_1}{n}\Big)^{3/2}|s_1-s_2|^{3/2}.
\end{align*}
If $s_1<s_2$ and $|s_1-s_2|\leq 2/n$, we have by monotonicity that for any $s\in(s_1,s_2)$
\begin{align}\label{bickel}
 &\bigg|\frac{1}{a_n}\sum_{i=n_1+1}^{n_2}\zeta_{s}(S_i)-\frac{1}{a_n}\sum_{i=n_1+1}^{n_2}\zeta_{s_1}(S_i)\bigg|\nonumber\\
\leq& \bigg|\frac{1}{a_n}\sum_{i=n_1+1}^{n_2}\mathds{1}_{\{\xi_{S_i}\leq s\}}-\frac{1}{a_n}\sum_{i=n_1+1}^{n_2}\mathds{1}_{\{\xi_{S_i}\leq s_1\}}\bigg|+\frac{n_2-n_1}{a_n}|s-s_1|\displaybreak[0]\nonumber\\
\leq& \bigg|\frac{1}{a_n}\sum_{i=n_1+1}^{n_2}\mathds{1}_{\{\xi_{S_i}\leq s_2\}}-\frac{1}{a_n}\sum_{i=n_1+1}^{n_2}\mathds{1}_{\{\xi_{S_i}\leq s_1\}}\bigg|+\frac{n_2-n_1}{a_n}|s_2-s_1|\displaybreak[0]\nonumber\\
\leq&\bigg|\frac{1}{a_n}\sum_{i=n_1+1}^{n_2}\zeta_{s_2}(S_i)-\frac{1}{a_n}\sum_{i=n_1+1}^{n_2}\zeta_{s_1}(S_i)\bigg|+2\frac{n_2-n_1}{a_n}|s_2-s_1|\nonumber\\
\leq& \bigg|\frac{1}{a_n}\sum_{i=n_1+1}^{n_2}\zeta_{s_2}(S_i)-\frac{1}{a_n}\sum_{i=n_1+1}^{n_2}\zeta_{s_1}(S_i)\bigg|+\frac{4}{a_n}.
\end{align}
Following Bickel and Wichura \cite{bick71}, we introduce for a two-parameter stochastic process $(V(s,t))_{s,t\in[0,1]}$ the notation
\begin{multline*}
 w''_{\delta}(V)\\
 :=\max\Big\{\sup_{\substack{0\leq t_1\leq t\leq t_2\leq 1\\t_2-t_1\leq \delta}}\min\left\{\|V(\cdot,t_2)-V(\cdot,t)\|_\infty,\|V(\cdot,t)-V(\cdot,t_1)\|_\infty\right\},\\
\sup_{\substack{0\leq s_1\leq s\leq s_2\leq 1\\s_2-s_1\leq\delta}}\min\left\{\|V(s_2,\cdot)-V(s,\cdot)\|_\infty,\|V(s,\cdot)-V(s_1,\cdot)\|_\infty\right\}\Big\},
\end{multline*}
where $\|\cdot\|_\infty$ denotes the supremum norm. For $(W_n(s,t))_{s,t\in[0,1]^2}$ with
\begin{equation*}
W_n(s,t):=\frac{1}{a_n}\sum_{i=1}^{[nt]}\zeta_{s}(S_i)
\end{equation*}
and the index set $D_n:=\left\{0,\frac{1}{n},\frac{2}{n},\ldots,1\right\}^2$, we have by \eqref{bickel}
\begin{equation*}
w''_{\delta}(W_n)\leq w''_{\delta}(W_{n|D_n})+\frac{4}{a_n},
\end{equation*}
where $ w''_{\delta}(W_{n|D_n})$ is calculated by restricting all suprema to the set $D_n$. Now by Theorem 3 (and the remarks following their theorem) of Bickel and Wichura \cite{bick71} together with our moment bound
\begin{equation*}
 \E\left[\bigg(\frac{1}{a_n}\sum_{i=n_1+1}^{n_2}\big(\zeta_{s_1}(S_i)-\zeta_{s_2}(S_i)\big)\bigg)^4
\right]\leq 2 C_1\Big(\frac{n_2-n_1}{n}\Big)^{3/2}|s_1-s_2|^{3/2},
\end{equation*}
we can conclude that for any $\epsilon>0$
\begin{equation*}
 \P\left(\limsup_{n\rightarrow\infty}w''_{\delta}(W_{n|D_n})>\epsilon\right)\xrightarrow{\delta\rightarrow 0}0
\end{equation*}
and consequently
\begin{equation*}
 \P\left(\limsup_{n\rightarrow\infty}w''_{\delta}(W_{n})>\epsilon\right)\xrightarrow{\delta\rightarrow 0}0.
\end{equation*}
Thus the process is tight by Corollary 1 of \cite{bick71}.

\small

\end{document}